\documentclass[12pt]{article}
\usepackage[utf8]{inputenc}
\usepackage{amsmath}
\usepackage{amsfonts}
\usepackage{amsthm}
\usepackage{amssymb}
\usepackage{bbm}
\usepackage{color}

\title{On the Union-Closed Sets Conjecture}
\author{YINING HU \\
CNRS, Institut de Math\'ematiques de Jussieu-PRG \\
Universit\'e Pierre et Marie Curie, Case 247 \\
4 Place Jussieu \\
F-75252 Paris Cedex 05 (France) \\
{\tt yining.hu@imj-prg.fr}}
\date{}

\newtheorem{thm}{Theorem}
\newtheorem{coro}{Corollary}

\newtheorem{lem}{Lemma}
\theoremstyle{definition}
\newtheorem{defi}{Definition}

\begin{document}
\maketitle
\section{Introduction}
A collection of sets $\mathcal{A}$ is \emph{union-closed} if $S,T\in \mathcal{A}$ implies that $S\cup T \in \mathcal{A}$.
The following conjecture, often attributed to Peter Frankl, dates back to 1979. Recently Blinovsky \cite{blin} and Sch\"{a}ge \cite{Sven} 
claim to have proven the conjecture, but their proofs seem to be false. We refer the interested reader to a comprehensive overview
of this conjecture by Bruhn and Schaudt \cite{bruhn}.

 \newtheorem*{conj}{Union-closed sets Conjecture}
 \begin{conj}
 Let $\mathcal{A}$ be a union-closed finite collection of sets, containing at least one non-empty set, then there is an element which belongs to at least half of the sets in $\mathcal{A}$.
 \end{conj}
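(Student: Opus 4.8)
The plan is to set up a double-counting framework and then try to run an induction on the size of the ground set. Write $U=\bigcup_{S\in\mathcal{A}}S$ and, for each $x\in U$, let $d(x)=|\{S\in\mathcal{A}:x\in S\}|$ be its degree; the conjecture is exactly the assertion $\max_{x\in U} d(x)\geq |\mathcal{A}|/2$. Before attempting anything I would record the extremal family $\mathcal{A}=2^{[n]}$, in which every degree equals $|\mathcal{A}|/2$ exactly, so that I can see which inequalities any successful argument is forced to make tight. I would also note at the outset that the naive averaging route is hopeless: the mean degree $\tfrac{1}{|U|}\sum_x d(x)$ can dip strictly below $|\mathcal{A}|/2$, for instance by adjoining a single rare element $z$ to a power set via $2^{[m]}\cup\{[m]\cup\{z\}\}$, where $z$ has degree $1$ and pulls the average under a half. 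So the target really is the maximum, not the average, and any proof must locate a \emph{particular} heavy element rather than argue on aggregate.

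Next I would exploit the lattice structure. Since $\mathcal{A}$ is union-closed it has a maximum element $U$, and for every $x$ both $\mathcal{A}_x=\{S\in\mathcal{A}:x\in S\}$ and $\mathcal{A}_{\bar x}=\{S\in\mathcal{A}:x\notin S\}$ are themselves union-closed. The adjunction map $\varphi_x\colon S\mapsto S\cup\{x\}$ injects $\mathcal{A}_{\bar x}$ into the sets containing $x$, and whenever $\varphi_x(\mathcal{A}_{\bar x})\subseteq\mathcal{A}$ we get $\varphi_x(\mathcal{A}_{\bar x})\subseteq\mathcal{A}_x$, hence $d(x)=|\mathcal{A}_x|\geq|\mathcal{A}_{\bar x}|=|\mathcal{A}|-d(x)$ and $x$ is abundant. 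The clean reformulation I would aim to leverage is therefore: \emph{$x$ is abundant as soon as adjoining $x$ to a member never leaves $\mathcal{A}$}, i.e.\ as soon as the adjunction map at $x$ is internal. The whole problem becomes the search for one element whose adjunction map is, in a quantitative sense, mostly internal.

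The induction step I would try is to pass to the trace $\mathcal{T}=\{S\setminus\{x\}:S\in\mathcal{A}\}$ on the smaller ground set $U\setminus\{x\}$, which is again union-closed, apply the inductive hypothesis to obtain an abundant $y\neq x$ in $\mathcal{T}$, and then lift abundance back to $\mathcal{A}$. Here is where I expect the real obstacle, and I suspect it is exactly why the conjecture has resisted elementary induction. The trace \emph{merges} each pair $\{S,\,S\cup\{x\}\}$ for which both members lie in $\mathcal{A}$, so $|\mathcal{T}|=|\mathcal{A}|-|\{S\in\mathcal{A}_x:\varphi_x^{-1}(S)\in\mathcal{A}\}|$, and these merges perturb the degree $d_{\mathcal{T}}(y)$ relative to $d_{\mathcal{A}}(y)$ by an amount governed by the unknown overlap between $\mathcal{A}_x$ and $\varphi_x(\mathcal{A}_{\bar x})$. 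There is no element $x$ whose deletion is guaranteed to leave this overlap controlled, so abundance of $y$ downstairs does not transfer upstairs.

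To break this, I would abandon uniform counting in favour of a size-weighted incidence count, trying to bound $\sum_{S\in\mathcal{A}}|S|$ against a quantity that is forced to concentrate on a single element, while choosing the weights so that the power-set family sits at exact equality; the tightness bookkeeping from the first paragraph is what would tell me whether a candidate weighting can possibly work. An alternative I would keep in reserve is a simultaneous double induction on a carefully chosen \emph{pair} of elements, arranged so that the merge term from the trace of one is cancelled by the contribution of the other. In either form, pinning down the correct weighting (or the correct pairing) that reconciles the smaller-ground-set recursion with the power-set equality case is, I anticipate, the crux on which the entire argument turns, and the step most likely to fail.
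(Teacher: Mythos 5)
There is nothing in the paper to compare your attempt against: the statement you were handed is the union-closed sets conjecture itself, which this paper explicitly treats as open. The paper proves only partial results --- that a minimal separating counterexample satisfies $|\mathcal{A}|\geq 4m-1$ (Theorem~\ref{min}), and that a linear bound $|\mathcal{A}|\leq c\cdot|U(\mathcal{A})|$ with $c>2$ would imply the $\varepsilon$-version --- and its introduction pointedly notes that recent claimed proofs of the full conjecture ``seem to be false.'' So any complete proof of this statement would be a major result, not a routine exercise.

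Your proposal, in turn, is not a proof but a research plan, and by your own admission it stops exactly where the difficulty begins. The observations you do make are correct: the power set $2^{[n]}$ is tight; averaging fails (your example $2^{[m]}\cup\{[m]\cup\{z\}\}$ does pull the mean degree strictly below $|\mathcal{A}|/2$ for $m\geq 2$); the adjunction criterion (if $S\cup\{x\}\in\mathcal{A}$ for every $S\in\mathcal{A}$, then $x$ is abundant) is a valid sufficient condition, though there are union-closed families, e.g. $\{12,34,1234\}$, in which no element satisfies it; and the merge obstruction to the trace induction is the well-known reason naive deletion arguments fail. But the final paragraph, where the theorem would actually have to be proved, contains no mathematics: no weighting is defined, no inequality is derived, no pairing is constructed, and you yourself flag this step as the one ``most likely to fail.'' Identifying the crux is not the same as resolving it; as a proof of the conjecture, the proposal has a gap that is, at present, the gap of the whole field.
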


Here are some notation and convention that we will adopt in this article:
We use abbreviated notation for collections of sets of integers. For example, $\{\{1,2\},\{1,2,3\},\{3,4\}\}$ denoted by $\{12,123,34\}$. The set
$\{1,2,...,m\}$ is denoted by $[m]$, the power set of $[m]$ is denoted by $\mathcal{P}(m)$.
 For convenience we can always assume that a union-closed finite collection $\mathcal{A}$ on $n$ elements is a subset of $\mathcal{P}(n)$. 
The \emph{universe} $U(\mathcal{A})$ is the set of all elements that appear in the member sets of $\mathcal{A}$, that is, $\cup_{A\in\mathcal{A}}A$. 
 $\mathcal{A}$ is \emph{separating} if for any two distinct elements in $U(A)$, there is a set in $\mathcal{A}$ that contains one of them but does not contain the other. 
In the conjecture we could also require the collection to be separating, but this does not make a difference. 
For an element $a\in U(A)$, its frequency in $\mathcal{A}$ will be denoted by $|a|_{\mathcal{A}}$. The sub-collection of sets containing $a$ will be noted by $\mathcal{A}_a$, and its complement in $\mathcal{A}$ by $\mathcal{A}_{\bar{a}}$. Both $\mathcal{A}_a$ and $\mathcal{A}_{\bar{a}}$ are still union-closed sets. An element $b$ is said to \emph{dominate} another element $c$ in $\mathcal{A}$ if $b$ occurs in every set in $\mathcal{A}$ that contains $c$, using the notation above, this is equivalent to saying $c\notin U( \mathcal{A}_{\bar{b}})$. A set $T\in \mathcal{A}$ is said to be a \emph{basis set} if it is not the union of other sets in $\mathcal{A}$. If we remove a basis set from $\mathcal{A}$, the rest is still closed by union.

\section{On a minimal counterexample}

In all this section, let $\mathcal{A}$ be a union-closed, separating collection, where $U(\mathcal{A})=\{1,...,m\}$ 
with $|1|_\mathcal{A}\leq...\leq|m|_\mathcal{A}$. Roberts and Simpson(\cite{roberts}) have proven 
if $\mathcal{A}$ is a counterexample with the least number of sets, 
then $|\mathcal{A}|\geq 4m+1$.
Here we give an alternative proof of their result. We need to define some other notions.

For all elements $i$, we define the set 
$$A_{i}:=\cup_{\{A\in \mathcal{A}\;|\; i\notin A\}} A,$$
This set contains all elements $j$ greater than $i$, because by the assumption of separation and 
$|i|_\mathcal{A}\leq |j|_\mathcal{A}$,  there exists 
a set in $\mathcal{A}$ that contain $j$ but not $i$. 
Thus $\mathcal{A}$ contains a sub-collection $\mathcal{S}=\{ A_0=U(\mathcal{A}), A_1,..., A_{m-1} \}$ 
whose structure can be represented by the following table, where 
``1'' means that the element in the column is in the set in the row, ``0'' means that the element in the column is not in the set in the row,
``?'' means not determined. 

\vspace{5mm}

\begin{center}\begin{tabular}{c c c c c c c c c } \hline
 &$1$&$2$&$3$&\ldots&$m-3$&$m-2$&$m-1$&$m$  \\ \hline
$A_{m-1}$&?&?&?&\ldots&?&?&0&1 \\
$A_{m-2}$&?&?&?&\ldots&?&0&1&1 \\
$A_{m-3}$&?&?&?&\ldots&0&1&1&1 \\
\vdots &\vdots & & & & &\vdots &\vdots &\vdots \\
$A_{2}$&?&0&1&\ldots&1&1&1&1 \\
$A_{1}$&0&1&1&\ldots&1&1&1&1 \\
$A_{0}$ &1&1&1&\ldots&1&1&1&1 \\ \hline
\end{tabular}
\end{center}
\vspace{5mm}

If $\mathcal{A}$ is a counterexample of the union-closed sets conjecture with the least number of sets, 
then we know that $\mathcal{A}_{\bar {m}}$, which is also closed by union and has smaller cardinality than $\mathcal{A}$, 
satisfies the conjecture, thus containing an element that appears in at least half of the sets in $\mathcal{A}_{\bar {m}}$. 
It would be nice if we knew that the most frequent element in $\mathcal{A}_{\bar {m}}$ is also frequent in $\mathcal{S}$, 
because then we would know that the maximal frequency in $\mathcal{A}$ is at least the sum of its frequencies in $\mathcal{A}_{\bar {m}}$ 
and $\mathcal{S}$. Indeed we have such a result, which is the corollary of the following lemma:

\begin{lem}
\label{dom}
For $i\in\{1,...,m-1\}$, if $|i|_\mathcal{S}<m-1$, then there exists an element in $\{1,...,m-1\}$ with frequency $m-1$ in ${\mathcal{S}}$
that dominates $i$ in $\mathcal{A}$.
\end{lem}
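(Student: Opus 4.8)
The plan is to translate everything into the language of domination and then read off the required frequency directly from the table. The crucial observation is that the set $A_b$ defined above is exactly $U(\mathcal{A}_{\bar b})$, the union of all members of $\mathcal{A}$ avoiding $b$; hence, by the definition of domination recalled in the introduction, $b$ dominates $c$ if and only if $c\notin A_b$. First I would use this to reinterpret the frequency $|b|_{\mathcal{S}}$. Reading the table, $b$ lies in $A_k$ for every $k<b$ and lies in no set $A_b$, while for $k>b$ the entry is determined precisely by whether $k$ dominates $b$. Consequently $|b|_{\mathcal{S}}=m-1$ holds exactly when no index $k\in\{1,\dots,m-1\}\setminus\{b\}$ dominates $b$; in other words, having maximal frequency $m-1$ in $\mathcal{S}$ is the same as being undominated inside $\{1,\dots,m-1\}$.

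Next I would record two elementary properties of domination. It is transitive: if $a$ dominates $b$ and $b$ dominates $c$, then every set containing $c$ contains $b$ and therefore $a$. Moreover, under the separation hypothesis, strict domination is strictly monotone in $\mathcal{A}$-frequency: if $b\neq c$ and $b$ dominates $c$, then $\mathcal{A}_c\subseteq\mathcal{A}_b$, and the inclusion is strict, since separation forbids $b$ and $c$ from occurring in exactly the same sets; thus $|c|_{\mathcal{A}}<|b|_{\mathcal{A}}$.

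With these in hand, I would choose $b$ to be an element of $\{1,\dots,m-1\}$ that dominates $i$ and has the largest possible frequency $|b|_{\mathcal{A}}$; this family is nonempty because $i$ dominates itself. I claim $b$ has $\mathcal{S}$-frequency $m-1$. Indeed, were some $c\in\{1,\dots,m-1\}\setminus\{b\}$ to dominate $b$, transitivity would make $c$ a dominator of $i$ lying in $\{1,\dots,m-1\}$, while strict monotonicity would give $|c|_{\mathcal{A}}>|b|_{\mathcal{A}}$, contradicting the maximal choice of $b$. Hence no such $c$ exists, so by the reinterpretation of the first paragraph $|b|_{\mathcal{S}}=m-1$, and $b$ dominates $i$ by construction. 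The hypothesis $|i|_{\mathcal{S}}<m-1$ then forces $b\neq i$, as it should.

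The point I expect to require the most care is the exceptional status of the element $m$: its column is all $1$'s, there is no row $A_m$ in $\mathcal{S}$, and indeed $|m|_{\mathcal{S}}=m$ rather than $m-1$. The maximization must therefore be restricted to $\{1,\dots,m-1\}$, so that an element $b$ dominated only by $m$ still counts as having the full $\mathcal{S}$-frequency $m-1$. This is exactly why the lemma asks for a dominator in $\{1,\dots,m-1\}$, and why the argument locates a maximal element of the domination order inside that range rather than in the whole universe.
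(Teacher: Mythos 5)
Your proof is correct. It differs from the paper's own argument in how it locates the undominated element. The paper iterates: since every $A_k$ with $k<i$ contains $i$, the hypothesis $|i|_{\mathcal{S}}<m-1$ forces a dominator $j$ of $i$ with $i<j\leq m-1$; one then repeats the step with $j$ in place of $i$, and the strictly increasing indices guarantee that the process stops at an element of $\mathcal{S}$-frequency $m-1$, which dominates $i$ by transitivity. You instead make a single extremal choice: among all dominators of $i$ in $\{1,\dots,m-1\}$ you take one of maximal $\mathcal{A}$-frequency, and you exclude any further dominator via your strict-monotonicity observation (strict domination forces strictly larger $\mathcal{A}$-frequency, by separation). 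Both arguments rest on the same two pillars, which the paper leaves largely implicit: the identification that $b$ dominates $c$ exactly when $c\notin A_b$, so that $|b|_{\mathcal{S}}=m-1$ is equivalent to $b$ being undominated within $\{1,\dots,m-1\}$, and the transitivity of domination. The real difference is where separation enters: the paper uses it once, in establishing the table (which is what makes every dominator have a larger index and hence makes the iteration terminate), while you use it to get the strict frequency inequality. Your version dissolves the termination question entirely and makes the characterization of $\mathcal{S}$-frequency explicit, at the cost of an extra lemma; the paper's version is shorter because the monotonicity it needs is already encoded in the table. Your closing remark about the special role of the element $m$ (no row $A_m$, so being dominated only by $m$ does not lower $\mathcal{S}$-frequency) is accurate and is precisely why the restriction to $\{1,\dots,m-1\}$ appears in the statement.
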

\begin{proof}
If $|i|_\mathcal{S}<m-1$, then there exists an element $j$ in  $\{i+1,...,m-1\}$ such that $i\notin A_j$. This means that $i$ is dominated by $j$ in $\mathcal{A}$, 
as $A_j$ is by definition the union of sets in $\mathcal{A}_{\bar{j}}$. If $|j|_\mathcal{S}=m-1$, we are done. 
If not, we apply the above process to $j$ and iterate until
we find an element $k$  in $\{1,...,m-1\}$ with $|k|_{\mathcal{S}}=m-1$ that dominates $i$.

\end{proof}

\begin{coro}
\label{coro}
Among the elements of maximal frequency in a non-empty sub-collection of $\mathcal{A}$, there exists one with frequency $m-1$ in $\mathcal{S}$.
\end{coro}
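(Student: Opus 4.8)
The plan is to derive the corollary from Lemma~\ref{dom} together with the simple but crucial observation that \emph{domination in $\mathcal{A}$ forces frequency-domination in every sub-collection}. Concretely, I would first record the following: if $b$ dominates $c$ in $\mathcal{A}$, then by definition every set of $\mathcal{A}$ containing $c$ also contains $b$; restricting to any sub-collection $\mathcal{B}\subseteq\mathcal{A}$, the sets of $\mathcal{B}$ that contain $c$ form a sub-collection of those that contain $b$, so $\mathcal{B}_c\subseteq\mathcal{B}_b$ and hence $|c|_{\mathcal{B}}\le|b|_{\mathcal{B}}$. This monotonicity is the only new ingredient needed, and it holds for an arbitrary $\mathcal{B}$.

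With this in hand, let $\mathcal{B}$ be the given non-empty sub-collection and let $M:=\max_{a}|a|_{\mathcal{B}}$ be its maximal frequency. I would pick an element $a\in\{1,\dots,m-1\}$ with $|a|_{\mathcal{B}}=M$. If $|a|_{\mathcal{S}}=m-1$ we are already done. Otherwise $|a|_{\mathcal{S}}<m-1$, and Lemma~\ref{dom} produces an element $b\in\{1,\dots,m-1\}$ with $|b|_{\mathcal{S}}=m-1$ that dominates $a$ in $\mathcal{A}$. The frequency-domination observation then gives $|b|_{\mathcal{B}}\ge|a|_{\mathcal{B}}=M$, and since $M$ is maximal we must have $|b|_{\mathcal{B}}=M$. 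Thus $b$ is an element of maximal frequency in $\mathcal{B}$ with $|b|_{\mathcal{S}}=m-1$, as desired.

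The one point that requires care — and the main obstacle — is the need to start from a maximal-frequency element lying in $\{1,\dots,m-1\}$, since Lemma~\ref{dom} says nothing about the top element $m$, for which $|m|_{\mathcal{S}}=m$ rather than $m-1$. In the application we have in mind this difficulty evaporates: the corollary is invoked for $\mathcal{B}=\mathcal{A}_{\bar{m}}$, where no set contains $m$, so $|m|_{\mathcal{B}}=0$ and $m$ can never attain the maximal frequency; hence a maximal-frequency element in $\{1,\dots,m-1\}$ always exists to feed into the argument above. Equivalently, one should read the conclusion as asserting a maximal-frequency element whose $\mathcal{S}$-frequency is \emph{at least} $m-1$: the element $m$, on the occasions it is maximal, already satisfies this with $|m|_{\mathcal{S}}=m$. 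I expect verifying this reduction to be the only genuinely delicate step; the rest is a direct combination of the lemma and the monotonicity of frequency under domination.
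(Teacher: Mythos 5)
Your proof is correct and follows essentially the same route as the paper's: apply Lemma~\ref{dom} to a maximal-frequency element and note that domination in $\mathcal{A}$ passes to any sub-collection $\mathcal{B}$, so the dominating element is also of maximal frequency there. You are in fact slightly more careful than the paper, whose proof silently skips the edge case of the element $m$ (for which $|m|_{\mathcal{S}}=m$, not $m-1$) that you explicitly resolve via the intended application to $\mathcal{B}=\mathcal{A}_{\bar{m}}$.
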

\begin{proof}
Let $\mathcal{B}$ be a non-empty sub-collection of $\mathcal{A}$, and $i$ be an element of maximal frequency in $\mathcal{B}$.
If $i$ appears less than $m-1$ times in $\mathcal{S}$, then there exists an element $j$ in $\{1,...,m-1\}$ with
frequency $|j|_\mathcal{S}=m-1$  that dominates $i$ in $\mathcal{A}$, thus in $\mathcal{B}$.
\end{proof}

 If $\mathcal{A}$ is a minimal counterexample of the conjecture, then $|\mathcal{A}|=2n+1$ for some integer $n$. In fact, if $\mathcal{B}$ is a counterexample with $|\mathcal{B}|=2n+2$, then we know that the maximal frequency in $\mathcal{B}$ is less than $n+1$. If we remove a basis set from $\mathcal{B}$, then we get a union-closed collection with $2n+1$ sets whose maximal frequency is still less than $n+1$, which makes a yet smaller counterexample. If $a$ is an element of maximal frequency in $\mathcal{A}$, then $|a|_{\mathcal{A}}=n$. This is because if the frequency of $a$ were less than $n$, then we could remove a basis set from $\mathcal{A}$ and get a smaller counterexample. \\
 \\
 Now we know that if $\mathcal{A}$ is a minimal counterexample, then in $\mathcal{A}_{\bar {m}}$, there are elements that occur in at least half of the sets, and among these elements, there is at least one with frequency $m-1$ in $\mathcal{S}$. Thus $\mathcal{A}_m$ must contain more than $2m-2$ elements for $\mathcal{A}$ to be a counterexample. This, combined with the discussion above, leads to the following theorem:

\begin{thm}\label{min}
If a separating, union-closed collection $\mathcal{A}$ is a counterexample of the union-closed sets conjecture of minimal cardinality, then $|\mathcal{A}|\geq 4m-1$, where $m=|U( \mathcal{A})|$.
\end{thm}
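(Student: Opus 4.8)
The plan is to show that the most frequent element alone already attains frequency at least $2m-1$, which in a minimal counterexample forces $|\mathcal{A}|$ to be large. Recall from the discussion preceding the statement that a minimal counterexample has $|\mathcal{A}|=2n+1$ and maximal frequency equal to $n$; since $|1|_\mathcal{A}\leq\cdots\leq|m|_\mathcal{A}$, this maximum is realized by $m$, so $|\mathcal{A}_m|=|m|_\mathcal{A}=n$ and $|\mathcal{A}_{\bar m}|=n+1$. Thus it suffices to prove $|\mathcal{A}_m|\geq 2m-1$, for then $n\geq 2m-1$ and $|\mathcal{A}|=2n+1\geq 4m-1$.

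First I would record the structural observation that $\mathcal{S}\subseteq\mathcal{A}_m$: by the remark following the definition of the $A_i$, each $A_i$ with $i\leq m-1$ contains every element larger than $i$, in particular it contains $m$, and $A_0=U(\mathcal{A})$ contains $m$ as well. Hence any set of $\mathcal{S}$ containing a given element $b\neq m$ is a set of $\mathcal{A}_m$ containing $b$, so $|b|_{\mathcal{A}_m}\geq|b|_\mathcal{S}$ for every $b\in\{1,\dots,m-1\}$.

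Next I would exploit minimality through the subcollection $\mathcal{A}_{\bar m}$. It is union-closed and strictly smaller than $\mathcal{A}$, and it contains a non-empty set (it consists of $|\mathcal{A}_{\bar m}|=n+1\geq 2$ distinct sets, at most one of which is empty), so by minimality of $\mathcal{A}$ the conjecture holds for $\mathcal{A}_{\bar m}$: some element attains frequency at least $\tfrac{1}{2}|\mathcal{A}_{\bar m}|$ there. Applying Corollary~\ref{coro} to the subcollection $\mathcal{A}_{\bar m}$, I can choose such a maximally frequent element $b$ that moreover satisfies $|b|_\mathcal{S}=m-1$. Combined with the previous step this gives $|b|_{\mathcal{A}_m}\geq m-1$ together with $|b|_{\mathcal{A}_{\bar m}}\geq\tfrac{1}{2}|\mathcal{A}_{\bar m}|$.

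Finally I would add the two contributions and invoke that $\mathcal{A}$ is a counterexample. Writing $|b|_\mathcal{A}=|b|_{\mathcal{A}_m}+|b|_{\mathcal{A}_{\bar m}}$ and using $|b|_\mathcal{A}<\tfrac{1}{2}|\mathcal{A}|=\tfrac{1}{2}(|\mathcal{A}_m|+|\mathcal{A}_{\bar m}|)$, the term $\tfrac{1}{2}|\mathcal{A}_{\bar m}|$ cancels and leaves $m-1<\tfrac{1}{2}|\mathcal{A}_m|$, that is $|\mathcal{A}_m|>2m-2$, hence $|\mathcal{A}_m|\geq 2m-1$ as required. I expect the main difficulty to be conceptual rather than computational: the argument hinges entirely on the coordination supplied by Corollary~\ref{coro}, namely that the element witnessing the conjecture inside $\mathcal{A}_{\bar m}$ can be chosen to be nearly full in $\mathcal{S}$, so that its $\mathcal{S}$-frequency transfers cleanly into $\mathcal{A}_m$ via the inclusion $\mathcal{S}\subseteq\mathcal{A}_m$; verifying that $\mathcal{A}_{\bar m}$ genuinely falls under the hypotheses of the conjecture is the only side condition to keep track of.
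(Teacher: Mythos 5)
Your proof is correct and takes essentially the same route as the paper's: both decompose $\mathcal{A}$ into $\mathcal{A}_m$ and $\mathcal{A}_{\bar m}$, apply minimality to $\mathcal{A}_{\bar m}$, use Corollary~\ref{coro} to pick a witness that is also full in $\mathcal{S}$, and add the two frequency contributions (via $\mathcal{S}\subseteq\mathcal{A}_m$) to force $n\geq 2m-1$. Your version merely makes explicit two points the paper leaves implicit --- the inclusion $\mathcal{S}\subseteq\mathcal{A}_m$ justifying the disjoint frequency count, and the check that $\mathcal{A}_{\bar m}$ contains a non-empty set --- which is a welcome tightening, not a different argument.
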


 \begin{proof}
As $\mathcal{A}$ is a minimal counterexample, $|\mathcal{A}|=2n+1$ for some integer n. $\mathcal{A}_{\bar{m}}$ is union-closed and $|\mathcal{A}_{\bar{m}}|=n+1$, therefore the maximal frequency in $\mathcal{A}_{\bar{m}}$ is at least $\frac{n+1}{2}$ by the minimality of $\mathcal{A}$. By Corollary \ref{coro}, there exists $a\in U( \mathcal{A})$ such that $|a|_{\mathcal{A}_{\bar{m}}}\geq \frac{n+1}{2}$ and $|a|_\mathcal{S}=m-1$. Also we must have $|a|_{\mathcal{A}_{\bar{m}}}+|a|_S\leq |a|_\mathcal{A}\leq n$, i.e., $\frac{n+1}{2}+m-1\leq n$. Therefore $n\geq 2m-1$ and $2n+1 \geq 4m-1$.
\end{proof}

Bo\v{s}njak and Markovi\'{c} \cite{bos} have proved that a minimal counterexample has $m\geq 12$ this number is improved by
\v{Z}ivkovi\'c and Vu\v{c}kovi\'c \cite{12} to 13. Thus Theorem \ref{min} implies that a minimal counterexample contains at least 51 sets. 
We always assume separation when we talk about the relation of the size of the counterexample and the size of the universe,
so that we could not just duplicate elements and make the size of the universe arbitrarily big with essentially the same collection.\\

\section{The $\varepsilon$-union-closed sets conjecture}

As the union-closed sets conjecture has proven to be a difficult problem, we may want to try to prove the following weakened conjecture:

\newtheorem{weak}{$\varepsilon$-Union-Closed Sets Conjecture}
\begin{weak}
  There exists $\varepsilon>0$ such that for all union-closed finite collection of sets $\mathcal{A}$ containing at least one non-empty set, 
  there is an element which belongs to at least $\varepsilon\cdot |\mathcal{A}|$ of the sets in  $\mathcal{A}$.
\end{weak}


In the last section we have proven that if $\mathcal{A}$ is a minimal counterexample of the union-closed sets conjecture,
then $|\mathcal{A}|\geq 4\cdot|U(\mathcal{A})|-1$. For a counterexample that is not necessarily minimal,  
what do we know about the relation of the size of the collection and the size of the universe? 
From the construction of the sub-collection $\mathcal{S}$ in the last section, we know that
for any union-closed finite collection of sets $\mathcal{B}$, there is an element of frequency at least $|U(\mathcal{B})|$. Thus the union-closed
sets conjecture holds for all union-closed finite collections $\mathcal{B}$ with $|\mathcal{B}|\leq 2\cdot |U(\mathcal{B})|$. 
This condition is fairly easy to establish. But can we obtain a better bound, for example with $2+\varepsilon'$ instead of $2$?
There is reason to believe that this would not be easy, as it would imply the $\varepsilon$-union-closed sets conjecture:

\begin{thm}
Let $c>2$. If the union-closed sets conjecture is true for all separating, union-closed finite collection $\mathcal{A}$ with 
$|\mathcal{A}|\leq c \cdot|U( \mathcal{A})|$, then for all union closed families $\mathcal{B}$, there exists $x\in U(\mathcal{B})$ 
with $|x|_{\mathcal{B}}\geq \frac{c-2}{2(c-1)}\cdot|U( \mathcal{B})|$.
\end{thm}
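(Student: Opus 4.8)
The plan is to apply the assumed conjecture not to $\mathcal{B}$ itself — whose ratio $|\mathcal{B}|/|U(\mathcal{B})|$ may be far larger than $c$ — but to a union-closed subfamily that retains the whole universe while bringing the ratio down to at most $c$. Write $M=U(\mathcal{B})$, $m=|M|$, $N=|\mathcal{B}|$, and, following the standing convention that separation is assumed whenever the size of a family is compared with the size of its universe, take $\mathcal{B}$ separating. The device I would use is the subcollection $\mathcal{S}=\{A_0,\dots,A_{m-1}\}$ of Section 2. Reading off the table, $\mathcal{S}$ is itself separating (for $i<j$ the set $A_i$ contains $j$ but not $i$) and $A_0=M\in\mathcal{S}$, so $U(\mathcal{S})=M$. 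Hence any union-closed $\mathcal{C}$ with $\mathcal{S}\subseteq\mathcal{C}\subseteq\mathcal{B}$ is again separating with $U(\mathcal{C})=M$, and satisfies $|\mathcal{C}|\ge|\mathcal{S}|=m$; in particular $N\ge m$.

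First I would dispose of the main case. Let $\mathcal{C}_0$ be the union-closure of $\mathcal{S}$ inside $\mathcal{B}$. If $|\mathcal{C}_0|\le cm$, then $\mathcal{C}_0$ is separating, union-closed, and satisfies $|\mathcal{C}_0|\le c\,|U(\mathcal{C}_0)|$, so the hypothesis supplies an element $x$ with $|x|_{\mathcal{C}_0}\ge\tfrac12|\mathcal{C}_0|\ge\tfrac{m}{2}$; since $\mathcal{C}_0\subseteq\mathcal{B}$, frequencies can only grow, so $|x|_{\mathcal{B}}\ge\tfrac{m}{2}$. Because $\tfrac{c-2}{2(c-1)}<\tfrac12$ for every $c>2$, this gives $|x|_{\mathcal{B}}\ge\tfrac{c-2}{2(c-1)}\,|U(\mathcal{B})|$, indeed somewhat more than the theorem asks. (If already $N\le cm$ one may simply take $\mathcal{C}_0=\mathcal{B}$.) The arithmetic comparing $\tfrac12|\mathcal{C}_0|$ with the target constant is routine, and I would not dwell on it.

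I expect the genuine obstacle to be the complementary case $|\mathcal{C}_0|>cm$: the natural candidate already violates the ratio bound, and one cannot shrink it while keeping $\mathcal{S}$ inside, since every basis set of $\mathcal{C}_0$ lies in $\mathcal{S}$, so any admissible deletion would either break union-closure or, by removing a member of $\mathcal{S}$, cost us separation or an element of $M$. The escape is that $\mathcal{S}$ pins a frequent element with no hypothesis at all: the element $m$ belongs to each of $A_0,\dots,A_{m-1}$, so $|m|_{\mathcal{B}}\ge|m|_{\mathcal{S}}=m$ — this is exactly the fact recorded at the start of this section that some element has frequency at least $|U(\mathcal{B})|$. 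As $m\ge\tfrac{c-2}{2(c-1)}|U(\mathcal{B})|$, this settles the residual case, so in every case some element occurs in at least $\tfrac{c-2}{2(c-1)}|U(\mathcal{B})|$ members of $\mathcal{B}$. The two points needing care are the verification that deleting basis sets outside $\mathcal{S}$ preserves both separation and the universe — immediate once $\mathcal{S}$, hence $M$, is retained, since $M$ covers every element — and the threshold at which $|\mathcal{C}_0|$ crosses $cm$; Corollary \ref{coro} is available if one wishes to choose $x$ among the dominating elements of frequency $m-1$ in $\mathcal{S}$.
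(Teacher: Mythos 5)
There is a genuine gap, and it lies in what you chose to prove rather than in the individual steps. Your argument establishes only the statement as literally printed, with $|U(\mathcal{B})|$ on the right-hand side --- and your own ``residual case'' shows why that reading cannot be the intended one: for a separating family the element $m$ lies in all $m$ sets of $\mathcal{S}$, so some element has frequency at least $|U(\mathcal{B})| > \frac{c-2}{2(c-1)}\cdot|U(\mathcal{B})|$ with \emph{no hypothesis whatsoever}, making the theorem vacuous under that reading (and for non-separating $\mathcal{B}$, which the statement formally allows, it is outright false: $\mathcal{B}=\{[m]\}$ has every frequency equal to $1$). The surrounding text announces that the theorem implies the $\varepsilon$-union-closed sets conjecture, which requires an element in a constant \emph{fraction of the number of sets}, and the paper's proof indeed derives $|x|_{\mathcal{B}} > \frac{c-2}{2(c-1)}\cdot|\mathcal{B}|$; the $|U(\mathcal{B})|$ in the displayed statement is a typo for $|\mathcal{B}|$. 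Your proof never approaches this: in your main case the hypothesis applied to $\mathcal{C}_0$ with $|\mathcal{C}_0|\leq cm$ certifies a frequency of at most $|\mathcal{C}_0|\leq cm$, and the residual case certifies $m$; both bounds are $O(m)$, negligible against $\frac{c-2}{2(c-1)}\,n$ when $n\gg m$, which is precisely the regime the theorem is about. Passing to a union-closed subfamily of ratio at most $c$ can never work, since a subfamily can only witness frequencies up to its own cardinality.

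The paper's proof goes in the opposite direction: it \emph{enlarges} $\mathcal{B}$ instead of shrinking it. One adjoins $p$ fresh elements $x_1,\dots,x_p$ and the $p$ sets $U(\mathcal{C})\setminus\{x_i\}$ for $i\leq p-1$ together with $U(\mathcal{C})$ itself; this preserves union-closure and separation, and choosing $p=\left\lceil\frac{n-cm}{c-1}\right\rceil$ brings the ratio $\frac{n+p}{m+p}$ down to at most $c$ while adding only $p$ sets. The hypothesis then yields an element in at least $\frac{n+p}{2}$ sets of $\mathcal{C}$; since each new element lies in at most $p<n$ sets, that element is old, and discarding the at most $p$ added sets leaves it with frequency at least $\frac{n-p}{2}>\frac{c-2}{2(c-1)}\,n$ in $\mathcal{B}$. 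This padding construction --- diluting the universe cheaply so that the ratio hypothesis becomes applicable to the \emph{whole} family, then subtracting the small cost $p$ --- is the key idea missing from your proposal; your dichotomy on the size of the closure of $\mathcal{S}$ addresses, in its two branches, a statement that is respectively much weaker than needed and trivially true.
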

\begin{proof}
Suppose that for $c>2$, the union-closed sets conjecture is true for all separating, union-closed finite collection $\mathcal{A}$ 
such that $|\mathcal{A}|\leq c \cdot|U( \mathcal{A}|)$. 

Let $\mathcal{B}$ be a union-closed finite collection with $|U(\mathcal{B})|=m$ and $|\mathcal{B}|=n$. 
If $n\leq c\cdot m$ then the conclusion is true. Suppose now that $n> c\cdot m$. Let $p$ be a positive integer whose value will be made precise later.
We construct another collection $\mathcal{C}$ by adding
$p$ new elements to $U(\mathcal{B})$ and $p$ new sets to $\mathcal{B}$:
$$U(\mathcal{C}):=U(\mathcal{B})\cup \{x_1,...,x_p\},$$
$$\mathcal{C}:= \mathcal{B}\cup \{U(\mathcal{C})\backslash \{x_i\}\;|\;i=1,...,p-1\} \cup \{U(\mathcal{C})\}.$$
$\mathcal{C}$ is still a union-closed, separating collection.
In order to apply the assumption to $\mathcal{C}$, we need $p$ to satisfy 
$$\frac{|\mathcal{C}|}{|U(\mathcal{C})|}=\frac{n+p}{m+p}\leq c,$$
that is, 
$$p\geq \frac{n-cm}{c-1}.$$
On the other hand, as we will see shortly after, we want $p$ to be as small as possible. So we choose
$$p= \left\lceil\frac{n-cm}{c-1}\right\rceil<\frac{n-cm}{c-1}+1.$$
Now by the assumption and the choice of $p$, we know that there is an element in $U(\mathcal{C})$ that appears in at least 
$\left\lceil \frac{n+p}{2}\right\rceil$ sets in $\mathcal{C}$. As $n>p$, this element cannot be one of the $p$ added elements, so it is
in $U(\mathcal{B})$. Its frequency in $\mathcal{B}$ is $\left\lceil\frac{n-p}{2}\right\rceil$. We have
\begin{align*}
 \frac{n-p}{2n}&>\frac{n-\frac{n-cm}{c-1}-1}{2n}\\
 &=\frac{(c-1)n-n+cm-c+1}{2n(c-1)}\\
 &>\frac{1}{2}-\frac{1}{2(c-1)}\\
 &=\frac{c-2}{2(c-1)},
\end{align*}
which ends the proof.


\end{proof}

\section{A bound for the minimal maximal frequency}

We define a function $\phi : \mathbbm{N}^*\rightarrow \mathbbm{N}^*$, where $\phi (n)$ is the minimum of maximal frequencies of union-closed collections over $n$ sets:
$$\phi(n)=\min_{\scriptscriptstyle{\mathcal{A} \mbox{\tiny  union-closed, } |\mathcal{A}|=n}}\max_{\scriptscriptstyle{a\in U(\mathcal{A})}}|a|_{\mathcal{A}}$$
The union-closed sets conjecture can be expressed as $\phi(n)\geq \frac{1}{2} \cdot n$.

\subsection{Renaud's construction and boundary function}
Renaud and Fitina \cite{renaud} has conjectured that $\phi(n)$ is equal to Conway's challenge sequence $(a(n))$ defined as:
$$a(1)=a(2)=1, \;\;a(n)=a(a(n-1))+a(n-a(n-1))$$
Mallows \cite{mallows} has proved that the sequence $(a(n))$ has the property that $a(n)\geq  \frac{1}{2} \cdot n$. 
We also know that $a(n+1)\in \{a(n), a(n)+1\}$. Renaud and Fitina \cite{renaud} have proved that $\phi(n)$ has the same property and that
$\phi(n)\leq a(n)$ by constructing a union-closed collection with maximal frequency $a(n)$ for all $n\geq 2$.

Renaud \cite{ren} has calculated the values of $\phi(n)$ for $n=1,...,18$. The values coincide with that of $a(n)$.
But at $n=23$, he has found an counterexample \cite{ren2}. Using abbreviated notation, the union-closed collection 
$$\mathcal{P}(4)\cup \{12345,1235,1245,1345,2345,125,345 \}$$ has highest element frequency 13, whereas $a(23)=14$.

Renaud \cite{ren2} then defined another function $\beta: \mathbb{N}^*\rightarrow \mathbb{N}^*$ with the 
property $\phi(n)\leq\beta(n)\leq a(n)$, whose value corresponds to the maximal frequency of the 
union-closed collection $\mathcal{B}(n)$ of $n$ sets, constructed in the following way:
let $k$ be an integer such that  $2^{k-1}<n\leq 2^k$. We obtain 
$\mathcal{B}(n)$ by deleting sets containing $k$ from $\mathcal{P}(k)$, following rules: a smaller set is always deleted before a larger set;
sets of the same size are deleted in an order such that the frequency of the element $1,...,k-1$ in the remaining sets are ``balanced'', that is, 
the difference of their frequencies is at most 1. Therefore if
$$n=2^k-\sum\limits_{i=1}^{r-1}\binom{k-1}{i}-v$$
where $0\leq r \leq k-1$ and $0\leq v<\binom{k-1}{r}$, then 
$$\beta(n)=2^{k-1}-\sum\limits_{i=1}^{r-1}\binom{k-2}{i-1}-\left\lfloor\frac{r\cdot v}{k-1}\right\rfloor.$$

For example $\mathcal{B}(23)=\mathcal{P}(4)\cup \{12345,1235,1245,1345,2345,125,345 \}$ is the collection mentioned above, and
$\beta (23)=13<14=\alpha(23)$. Therefore $\beta(n)$ is a better boundary function of $\phi(n)$. But $\beta(n)$ is not optimal either. Renaud gives
the example of the familly 
$$\mathcal{P}(6)\backslash\{6,5,16,15,36,45,136,145\}$$
in which the most frequent element appears in 30 sets, but $\beta(56)=31$.

\subsection{An improved bound of $\phi(n)$}
Here we give another way of constructing union-closed collections whose maximal frequency approximates $\phi(n)$ better, and we show that
the gap between $\phi(n)$ and $\beta(n)$ is not bounded. We make use of the following notion in our construction:

\begin{defi}
An up-set $\mathcal{U}$ on m elements is a subset of $\mathcal{P}(m)$ such that $S\in \mathcal{U}$ and $S\subset T \in [m]$ 
implies that $T\in \mathcal{U}$.
\end{defi}

We note that in the construction of the up-set of Renaud, the smaller sets are discarded while the larger sets are kept. 
We could have a lower maximal frequency if we could keep more small sets while keeping frequency ``balanced'' among the elements. 
For example, consider the the union-closed collection $\mathcal{C}$ composed of $\mathcal{P}(12)$ and the up-set on $13$ elements
generated by the sets $\{1,2,3,4,13\}$, $\{5,6,7,8,13\}$, $\{9,10,11,12,13\}$.
In the up-set, there are ``holes'' at level 6 to 11 (which does not happen in the construction of Renaud), 
that is,  we will not generate all sets of size $6$ to 11 in $\mathcal{P}(13)\backslash \mathcal{P}(12)$.
At the same time, all the elements in $\{1,2,...,12\}$ have the same frequency by symmetry. Therefore the maximal frequency in $\mathcal{C}$ is
$$2^{12}+\frac{\sum_{C\in\mathcal{C}\backslash {\mathcal{P}(12)}} (|C|-1)}{12} $$
Let $\mathcal{B}=\mathcal{B}(|\mathcal{C}|)$ be the union-closed collection with the same number of sets as $\mathcal{C}$ using the construction of
Renaud,  the maximal frequency in $\mathcal{B}$ is 
$$2^{12}+\left\lceil \frac{\sum_{B\in\mathcal{B}\backslash {\mathcal{P}(12)}} (|B|-1)}{12} \right\rceil$$
As $\mathcal{B}$ and $\mathcal{C}$ have the same number of sets but $\mathcal{B}$ contains more larger sets, 
the maximal frequency of $\mathcal{C}$ is smaller than that of $\mathcal{B}$.

More generally, let $s$ and $k$ be integers greater than 1, and not both equal to 2.
Let $\mathcal{C}_{s,k}$ be the union of $\mathcal{P}(sk)$ and the up-set on $sk+1$ elements generated by $\{1,2,\ldots,s,sk+1\},\;\{s+1,s+2,\ldots, 2s,sk+1\},\ldots,\; \{(k-1)s+1, (k-1)s+2, \ldots, ks,sk+1\}$.
In the up-set, there are holes at level $s+2$ to $k(s-1)+1$. Therefore there exists a bijective function $f$ from $\mathcal{C}_{s,k}$ to $\mathcal{B}(|\mathcal{C}_{s,k}|)$ such that for all $C\in \mathcal{C}_{s,k} $,
$|C|\leq |f(C)|$ and $\exists C\in \mathcal{C}$ such that $|C|<|f(C)|$. As in $\mathcal{C}$  the elements $1,2,...,sk$ have the same frequency by symmetry
 and the frequency of the element $sk+1$ is the same in $\mathcal{C}$ and $\mathcal{B}(\mathcal{C})$, the maximal frequency in $\mathcal{C}$ is smaller than that in $\mathcal{B}(|\mathcal{C}|)$.



To show that the gap between the maximal frequency of this construction and that of Renaud is not bounded,  
consider $\mathcal{C}_{2,N}$, which is the union of $\mathcal{P}(2N)$ and the up-set 
generated by the sets $\{1,...,N,2N+1\}$ and $\{N+1,...,2N,2N+1\}$. 
This up-set contains $2^{N+1}-1$ sets, and the frequency of an element in $\{1,2,...,2N\}$ is $2^N+2^{N-1}-1$.
The construction of Renaud $\mathcal{B}(2^{2N}+2^{N+1}-1)=\mathcal{P}(2N)\cup \mathcal{U}$, where $\mathcal{U}$ is an up-set on $2N+1$ elements. 
According to Mitzenmacher and Upfal \cite{coeff}, for all $k\in \mathbbm{N}$ and $k<2N$, 
$\binom{2N}{k}\geq \frac{1}{2N+1}\cdot 2^{H(k/2N)\cdot 2N}$, where $H$ is the binary entropy defined by 
$H(p)=-p\cdot \log_2(p)-(1-p)\log_2(1-p)$. 
For $N$ big enough and  $k=\left\lceil \frac{2N}{5}\right\rceil$, $H(k/2N)>H(0.2)>0.7$. Therefore
$$\binom{2N}{k}>\frac{1}{2N+1}\cdot 2^{H(k/2N)\cdot 2N}>\frac{1}{2N+1}\cdot 2^{1.4N}>2^{N+1}.$$
This means that in $\mathcal{U}$, there are no sets of size less than $2N-k=\left\lfloor\frac{8N}{5}\right\rfloor $. 
Therefore the maximal frequency among the elements $\{1,2,...,2N\}$ in $\mathcal{U}$ is at least 
$$\frac{\left\lfloor\frac{8N}{5}\right\rfloor-1}{2N}\cdot (2^{N+1}-1).$$
The gap between this number and $2^N+2^{N-1}-1$ goes up to infinity as $N$ approaches infinity.

The condition that $s$ and $k$ are both greater than one and not both equal to two is not always satisfied, for example when $n=56$.
In this case we obtain the same union-closed collection as Renaud. We have seen in the last subsection that $\beta(56)>\phi(56)$.
This means that our construction is not optimal either. Intuitively, this can be explained by the fact that in both constructions with $n+1$ 
elements,
while the frequency of 
elements $\{1,...,n\}$ is ``balanced'', the frequency of the element $n+1$ is too low, which leaves some space for further ``compression''.


\end{document}